\numberwithin{equation}{section}
\newtheorem*{mainresult}{Corollary \ref{Cbiggergroups}}
\author{Andrew Obus}
\address{Columbia University \\ Department of Mathematics \\ MC4403 \\ 2990 Broadway \\ New York, NY 10027}
\email{obus@math.columbia.edu}
\title{Toward Abhyankar's Inertia Conjecture For $PSL_2(\ell)$}
\def\ints{{\mathbb Z}}
\def\rats{{\mathbb Q}}
\def\nats{{\mathbb N}}
\def\reals{{\mathbb R}}
\def\proj{{\mathbb P}}
\def\FF{{\mathbb F}}
\def\Frac{{\text{Frac}}}
\def\Spec{{\text{Spec }}}
\DeclareMathOperator{\ord}{ord}
\def\mc#1{\mathcal{#1}}
\def\ol#1{\overline{#1}}
\newtheorem{theorem}{Theorem}[section]
\newtheorem{prop}[theorem]{Proposition}
\newtheorem{lemma}[theorem]{Lemma}
\newtheorem{corollary}[theorem]{Corollary}
\newtheorem{preremark}[theorem]{Remark}
\newenvironment{remark}{\begin{preremark}\rm}{\end{preremark}}
\begin{document}
\frontmatter
\begin{abstract} 
For $l \ne p$ odd primes, we examine $PSL_2(\ell)$-covers of the projective line branched at one point over an algebraically closed field $k$ of characteristic $p$, 
where $PSL_2(\ell)$ has order divisible by $p$.  We show that such covers can be realized with a large variety of inertia groups.
Furthermore, for each inertia group realized, we can realize all ``sufficiently large" higher ramification filtrations.
\end{abstract}
\subjclass{Primary 14H30; Secondary 20D06, 20G40}
\keywords{Galois cover, characteristic $p$, Abhyankar's conjecture}
\thanks{The author was supported by an NSF Postdoctoral 
Research Fellowship in the Mathematical Sciences}

\maketitle

\mainmatter
\section{Introduction}\label{CHintro}

Over an algebraically closed field $k$ of characteristic $0$, finite algebraic branched covers $Y \to \proj^1$ with $n$ fixed branch points are in one-to-one 
correspondence with finite topological branched covers of the Riemann sphere with $n$ fixed branch points.  Both correspond to finite index
subgroups of $\pi_1(\proj^1_k \backslash \{x_1, \ldots, x_n\})$, the free (profinite) group on $n-1$ generators.  In particular, there exist no nontrivial 
covers of $\proj^1_k$ branched at one point (hereafter called ``one-point covers").  

If $k$ is algebraically closed of characteristic $p$, the situation differs in two important ways.  First, there exist many covers which do not have topological
analogs.  In particular, as a consequence of \emph{Abhyankar's conjecture} (proven by Raynaud (\cite{Ra:ab}) in the case of the affine line and Harbater 
(\cite{Ha:ac}) in general), it follows that a finite
group $G$ can be realized as the Galois group of an $n$-point cover of $\proj^1_k$ exactly when $G/p(G)$ can be generated by $n-1$ elements (here,
$p(G)$ is the subgroup of $G$ generated by all of the $p$-Sylow subgroups).  Thus, $G$ occurs as the Galois group of a one-point cover iff $G = p(G)$.
Such a group is called \emph{quasi-$p$}.  The second major difference is that, unlike in characteristic zero, the genus of $Y$ is not determined by the 
degree, ramification points, and ramification indices of $f:Y \to \proj^1$, but also depends on wild ramification behavior at the ramification points.  This 
behavior is encoded in the higher ramification filtration (\S\ref{Sram}).

While Abhyankar's conjecture guarantees existence of certain one-point $G$-covers, it does not provide examples.  In particular, the question of
what subgroups $I \subseteq G$ 
can occur as the inertia group of a $G$-Galois one-point cover has been answered in only a few cases.  By basic ramification theory, 
such a group $I$ must be of the form $P \rtimes \ints/m$, where $P$ is a $p$-group and $p \nmid m$.  Furthermore, $I$ must also generate $G$ as a 
normal subgroup (this is automatic, for instance, when $G$ is simple).
\emph{Abhyankar's inertia conjecture} (\cite{Ab:rs}) states that any $I$ satisfying these properties occurs as the inertia group of a one-point 
$G$-cover.  This is known to be true for $G = PSL_2(p)$ and $G = A_p$, for $p \geq 5$ (\cite{BP:rr}), and for $A_{p+2}$ when 
$p \equiv 2 \pmod{3}$ is an odd prime (\cite{MP:ac}).  It is clearly true in some trivial cases, for instance when $G$ is abelian.
However, outside of these cases, our knowledge is limited.   Abhyankar has constructed many examples of one-point $G$-covers with various inertia 
groups where $G$ is simple,
but all of these examples are in cases where $G$ is alternating, sporadic, or of Lie type over a field of characteristic $p$.  
In particular, if $G$ is a simple group with a cyclic $p$-Sylow subgroup of order
greater than $p$ (such a group cannot be alternating, sporadic, or of Lie type over a field of characteristic $p$), 
then as far as I know, no examples of one-point $G$-covers have been constructed, let alone with
any particular inertia group.  

In this paper, we investigate Abhyankar's inertia conjecture for certain simple groups $G = PSL_2(\ell)$, where $p \, | \, |G|$ and $\ell \ne p$ are odd 
primes.  In this case, the group $PSL_2(\ell)$ has a cyclic $p$-Sylow subgroup of order $p^a$, where $a = v_p(\ell^2-1)$.
While we are not able to prove the conjecture in full (see Remark \ref{Rabhyankar}), we exhibit many examples with a wide variety of inertia groups:

\begin{mainresult}
Let $k$ be an algebraically closed field of characteristic $p \geq 7$ and $G \cong PSL_2(\ell)$, where $p \, | \, |G|$. 
Suppose $I$ is either a cyclic group of order $p^r$ or a dihedral group $D_{p^r}$ of order $2p^r$, with $1 \leq r \leq v_p(|G|)$.  Then there exists a
$G$-cover $f: Y \to \proj^1_k$ branched at one point with inertia groups isomorphic to $I$.
\end{mainresult}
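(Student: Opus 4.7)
The plan is to deduce the statement from the main construction theorem of this paper, which (per the abstract) produces $G$-covers of $\proj^1_k$ branched at one point with a variety of inertia groups and with flexibility in the higher ramification filtration. My first step would be to check the necessary combinatorial compatibility: both $\ints/p^r$ and $D_{p^r}$ are of the required form $P \rtimes \ints/m$ with $P$ a $p$-group and $p \nmid m$; and since $PSL_2(\ell)$ is simple, any nontrivial $I$ automatically normally generates $G$. Moreover these are precisely the possible inertia groups, since the $p$-Sylow of $PSL_2(\ell)$ is cyclic of order $p^a$ with $a = v_p(\ell^2-1)$ and its normalizer is dihedral of order $2p^a$, so the tame quotient of any inertia subgroup is forced to have order $1$ or $2$.

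For the dihedral case, for each $r$ with $1 \leq r \leq a$ I would start from a local cover of $\Spec k[[t]]$ with Galois group $D_{p^r}$, chosen with a ``sufficiently large'' upper ramification jump (as guaranteed in the abstract), and then apply the paper's formal/rigid patching machinery to extend this local datum to a global $G$-cover of $\proj^1_k$ unramified away from $0$. The large-jump condition ensures that the local cover dominates the global construction at the wild point and is compatible with the deformation framework.

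For the cyclic case, I would run the parallel argument starting from a $\ints/p^r$ local cover in place of the $D_{p^r}$ one. The choice of local inertia group is effectively a free parameter in the patching framework, provided it embeds into $G$ and its wild ramification is large enough. The two constructions are structurally the same; only the local datum differs.

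The main obstacle, which I expect is absorbed into the main theorem rather than into the corollary itself, is ensuring that the resulting global monodromy is all of $PSL_2(\ell)$ and does not collapse to a proper subgroup. This is where the real technical content lives: it typically requires auxiliary tame pieces glued elsewhere on $\proj^1_k$ together with a rigidity or embedding-problem argument that forces the global Galois group to fill out $G$. Granted the main theorem, the corollary should then reduce to the clean bookkeeping of selecting, for each admissible $I$, a compatible local cover with inertia $I$ and sufficiently large conductor.
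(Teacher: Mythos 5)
There is a genuine gap. Your proposal treats the local inertia group as ``effectively a free parameter in the patching framework, provided it embeds into $G$ and its wild ramification is large enough,'' but that is exactly what patching cannot deliver. The known patching/thickening results in this setting (e.g.\ \cite[Theorem 3.6]{Ha:ab}, which the paper does use) only allow one to \emph{enlarge} the $p$-part of an inertia group that has already been realized; they do not let you prescribe a small inertia group such as $D_p$ inside a group whose $p$-Sylow is cyclic of order $p^a$ with $a>1$. Realizing that small base case is the actual content of the paper, and it is obtained by a completely different mechanism: one constructs a \emph{three-point} $G$-cover in characteristic zero with carefully chosen branching indices (Lemma \ref{Lexistence}), passes to its stable reduction, and uses the vanishing cycles formula (Theorem \ref{Tvancycles}) to force the existence of a new \'etale tail with effective ramification invariant $\sigma_b = \frac{3}{2}$; the bound $\sigma_b \geq \frac{p^{r-1}}{2}$ together with $p \geq 7$ then forces the inertia over that tail to be $D_p$, and Huppert's classification of subgroups of $PSL_2(\ell)$ shows the cover of the tail is connected with full Galois group $G$ (Theorem \ref{Tonepointcover}). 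You correctly identify that surjectivity of the monodromy is an issue, but in the paper that part is comparatively easy; the hard step you are missing is producing the small inertia group at all.

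Your treatment of the cyclic case has the same problem, and moreover misses the trick the paper actually uses: cyclic inertia $\ints/p^r$ is not obtained by a parallel local-to-global construction but by taking the one-point cover with dihedral inertia $D_{p^r}$, pulling it back along the degree-two map $y = x^2$, and applying Abhyankar's Lemma to kill the tame part of the inertia. So the correct logical order is: stable reduction gives $D_p$; Harbater's theorem enlarges $D_p$ to $D_{p^r}$; Abhyankar's Lemma converts $D_{p^r}$ to $\ints/p^r$. As written, your argument assumes the conclusion in the guise of a ``main construction theorem'' that produces arbitrary admissible inertia directly.
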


This result generalizes \cite[Theorem 3.6]{BP:rr}, which deals with the case $a=1$. 
We also investigate the higher ramification behavior of one-point $PSL_2(\ell)$-covers with inertia group $I$.  We show that, in the situation of
Corollary \ref{Cbiggergroups}, any sufficiently ``large" higher ramification filtration must occur.  See Corollary \ref{Chigherram} for a more specific result.

\section*{Acknowledgments}
I thank Rachel Pries for many useful conversations on this topic.

\section{Preliminaries}\label{Sprelims}
Notation:  If $G$ is a group with cyclic $p$-Sylow subgroup $P$, we write $m_G$ for $|N_G(P)/Z_G(P)|$, the normalizer modulo the centralizer.
If $R$ is a local ring or a discretely valued field, then $\hat{R}$ is the usual completion.  If $x$ is a point of a scheme $X$, then $\mc{O}_{X, x}$ is
the local ring of $X$ at $x$.

\subsection{Higher ramification filtrations}\label{Sram}
We recall some facts from \cite[IV]{Se:lf}.
Let $K$ be a complete discrete valuation field with algebraically closed 
residue field $k$ of characteristic $p > 0$.  If $L/K$ is a finite Galois
extension of fields with Galois group $G$, then 
$L$ is also a complete discrete valuation field with residue field $k$.  Here
$G$ is of the form $P \rtimes \ints/m$, where
$P$ is a $p$-group and $m$ is prime to $p$.  The group $G$ has a filtration $G 
\supseteq G^i$ ($i \in \reals_{\geq 0}$) called the \emph{higher ramification filtration for the upper numbering}.  
If $i \leq j$, then $G^i \supseteq G^j$ (see \cite[IV, \S1, \S3]{Se:lf}).   
The subgroup $G^i$ is known as the \emph{$i$th higher
ramification group for the upper numbering}.  One knows that $G^0 = G$, and that for sufficiently small
$\epsilon > 0$, $G^{\epsilon} = P$.  For sufficiently large $i$, $G^i = \{id\}$. Any $i$ such that $G^i \supsetneq G^{i + \epsilon}$ 
for all $\epsilon > 0$ is called an \emph{upper jump} of the extension $L/K$.  If $i$ is an upper jump and $i > 0$,
then $G^i/G^{i + \epsilon}$ is an elementary abelian $p$-group for sufficiently small $\epsilon$.  
The greatest upper jump (i.e., the greatest $i$ such that $G^i \neq \{id\}$) is
called the \emph{conductor} of higher ramification of $L/K$.  

If $P \cong \ints/p^r$ is cyclic, then $G$ must have $r$ different positive upper jumps $u_1 < \cdots < u_r$.  Since the sequence 
$(u_1, \ldots, u_r)$ encodes the entire higher ramification filtration, we will simply say that such an extension (or the inertia group of
such an extension) has upper higher ramification filtration $(u_1, \ldots, u_r)$ in this case.

The higher ramification filtration is important because if $f:Y \to X$ is a branched cover of curves in characteristic $p$, 
and $y \in Y$ is a ramification point and $f(y) = x$, then the higher ramification filtration for $\hat{\mc{O}}_{Y,y}/\hat{\mc{O}}_{X,x}$ 
figures into the ramification divisor term in the Hurwitz formula.  Specifically, let $f: Y \to \proj^1$ be a one-point $G$-cover with inertia
groups $I \cong \ints/p^r \rtimes \ints/m$ with $p \nmid m$ and higher ramification filtration $(u_1, \ldots, u_r)$.  
Then, by \cite[Lemma 1]{Pr:lg}, the genus of $Y$ is $1 - |G| + \frac{|G|\deg(R)}{2mp^r}$, where $R$ is the ramification
divisor, which has degree $$mp^r - 1 + (p-1)m(u_1 + pu_2 + \cdots + p^{r-1}u_r).$$

\subsection{Stable reduction}\label{Sstable}
Let $X \cong \proj^1_K$, where $K$ is a characteristic zero complete discretely
valued field with 
algebraically closed residue field $k$ of characteristic $p>0$ (e.g., $K$ is the completion
of the maximal unramified extension of $\rats_p$).  Let $R$ be the valuation ring of $K$.  

Let $f: Y \to X$ be a $G$-Galois cover defined over $K$, with $G$ any finite
group, such that the branch points of $f$ are defined over $K$ and their
specializations 
do not collide on the special fiber of $X_R$.  Assume that
$2g_X - 2 + r \geq 1$, where $g_X$ is the genus of $X$ and $r$ is the number of branch points of $f$.  By a
theorem of Deligne and Mumford (\cite[Corollary 2.7]{DM:ir}), combined with work
of Raynaud (\cite{Ra:sp}) and Liu (\cite{Li:sr}), there is a
minimal finite extension $K^{st}/K$ 
with ring of integers $R^{st}$, and a unique model $Y^{st}$ of $Y_{K^{st}}$
(called the \emph{stable model}) such that

\begin{itemize}
\item The special fiber $\ol{Y}$ of $Y^{st}$ is semistable (i.e., it is reduced,
and has only nodes for singularities). 
\item The ramification points of $f_{K^{st}} = f \times_K K^{st}$ specialize to
\emph{distinct} smooth points of $\ol{Y}$.
\item Any genus zero irreducible component of $\ol{Y}$ contains at least three
marked points (i.e., ramification points or points of intersection with the rest
of $\ol{Y}$).
\end{itemize}
Since the stable model is unique, it is acted upon by $G$, and we set $X^{st} =
Y^{st}/G$.  Then $X^{st}$ is semistable (\cite{Ra:pg}) and can be naturally identified with a blowup of $X \times_R R^{st}$
centered at closed points.  The map $f^{st}: Y^{st} \to X^{st}$ is called the \emph{stable model} of $f$. 
The special fiber $\ol{f}: \ol{Y} \to \ol{X}$ of $f^{st}$ is called the \emph{stable reduction} of $f$.  

From now on, assume that $f$ has bad reduction (i.e., $\ol{X}$ is reducible).
Any irreducible component $\ol{X}_b$ of $\ol{X}$ that intersects the rest of $\ol{X}$ in only one point is called a $\emph{tail}$.
If $G$ acts without inertia above the generic point of $\ol{X}_b$, we call $\ol{X}_b$ an \emph{\'{e}tale tail}.  If, furthermore, 
$\ol{X}_b$ contains the specialization of a branch point of $f$, then $\ol{X}_b$ is called \emph{primitive}.
If not, $\ol{X}_b$ is called \emph{new}.  On an \'{e}tale tail $\ol{X}_b$, branching of $\ol{f}$ can only take place at the point where
$\ol{X}_b$ intersects the rest of $\ol{X}$ and, if $\ol{X}_b$ is primitive, where the branch point of $f$ specializes.

Suppose $G$ has a cyclic $p$-Sylow subgroup $P$.
Consider an \'{e}tale tail $\ol{X}_b$ of $\ol{X}$.  Let $\ol{x}_b$ be the unique point at which $\ol{X}_b$ intersects the rest of $\ol{X}$.
Let $\ol{Y}_b$ be a component of $\ol{Y}$ lying above $\ol{X}_b$, and let $\ol{y}_b$ be a point
lying above $\ol{x}_b$.  Then the \emph{effective ramification invariant} $\sigma_b$ is the conductor of higher ramification of 
the extension $\Frac(\hat{\mc{O}}_{\ol{Y}_b, \ol{y}_b})/\Frac(\hat{\mc{O}}_{\ol{X}_b, \ol{x}_b})$.  

The most important result that we will use is a special case of the \emph{vanishing cycles formula}.  This formula relates the stable reduction
of $f$ to the genus of $X$ and the branching behavior of $f$.  In particular, it places a limit on how many tails $\ol{X}$ can have and how large their
effective ramification invariants can be.  If $|G|$ is prime to $p$, then $f$ has good reduction, and there is no need for such a formula.  
The original version of the vanishing cycles formula, proved by Raynaud (\cite[\S3.4.2 (5)]{Ra:sp}), requires that $G$ has a $p$-Sylow subgroup of 
order $p$.  A generalized version was proven by the author in \cite[Theorem 3.14]{Ob:vc}, which applies when $G$ has a cyclic $p$-Sylow subgroup of
arbitrary order (Theorem \ref{Tvancycles} below is a special case, corresponding to \cite[Corollary 3.15]{Ob:vc}).  The formula will be essential
for us to exhibit a one-point cover in characteristic $p$ whose Galois group has large cyclic $p$-Sylow group but whose inertia groups are small. 

\begin{theorem}[Vanishing cycles formula]\label{Tvancycles}
Let $f: Y \to X \cong \proj^1_K$ be a three-point $G$-Galois cover with bad reduction, where $G$ has a cyclic
$p$-Sylow subgroup.  Let $B_{\text{new}}$ be an indexing set for the new \'{e}tale tails and let
$B_{\text{prim}}$ be an indexing set for the primitive \'{e}tale tails.  Then  

\begin{equation}\label{Evancycles} 
1 = \sum_{b \in B_{\text{new}}} (\sigma_b - 1) + \sum_{b \in B_{\text{prim}}} \sigma_b.
\end{equation}
\end{theorem}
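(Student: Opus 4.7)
The plan is to adapt Raynaud's proof of the vanishing cycles formula in the $|P| = p$ case \cite{Ra:sp} to the cyclic $p$-Sylow case of arbitrary order. The key identity arises from equating two ways of computing the arithmetic genus of $\ol{Y}$: via Riemann-Hurwitz on the generic fiber $f_K : Y_K \to X_K$, which produces the left-hand side $1 = 2g_X - 2 + r$ after dividing by $|G|$, and via a component-by-component analysis of $\ol{f}$, which produces the right-hand side.

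Since $X \cong \proj^1_K$, the dual graph of $\ol{X}$ is a tree. Each irreducible component of $\ol{X}$ is either \emph{\'{e}tale} (inertia acts trivially on the generic point of any component of $\ol{Y}$ above it) or \emph{wildly ramified} (inertia contains $P$, by the classification of stable reduction for covers with cyclic $p$-Sylow). The \'{e}tale tails $\ol{X}_b$ for $b \in B_{\text{new}} \cup B_{\text{prim}}$ are precisely the leaves of this tree at which inertia is trivial generically. On each such tail I would apply Riemann-Hurwitz to $\ol{Y}_b \to \ol{X}_b$: it is wildly ramified above $\ol{x}_b$ with conductor exactly $\sigma_b$, and additionally tamely ramified at the specialization of a branch point of $f$ precisely when $\ol{X}_b$ is primitive. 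These local contributions produce the $\sigma_b$ and $\sigma_b - 1$ terms appearing in \eqref{Evancycles}, with the difference reflecting whether an extra branch point specializes to the tail.

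Next, one must show that the total contribution from non-tail components vanishes. Interior \'{e}tale components meet the rest of $\ol{X}$ in at least two points, so their local Euler characteristic balances against the tame contributions at those nodes. For interior wildly ramified components, the higher ramification conductor at any node can be computed from either side of the node, so these contributions telescope along the tree and leave only boundary terms supported on the tails. Combining this with Riemann-Hurwitz for $f_K$, using $2g_X - 2 + r = 1$ for a three-point cover of $\proj^1$, and dividing by $|G|$ yields \eqref{Evancycles}.

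The main obstacle is the control of wildly ramified interior components when $|P| = p^a$ with $a > 1$. Raynaud's original argument exploits the fact that $Y \to Y/P$ has especially constrained structure when $|P| = p$; in the general cyclic case one must introduce auxiliary covers and filter $P$ through its chain of subgroups of order $p^i$ to run the analysis inductively. Careful book-keeping of the Herbrand function along each step of this tower, together with verifying that the conductors at the nodes of $\ol{X}$ still cancel in pairs, is where the genuine new work of \cite{Ob:vc} must lie.
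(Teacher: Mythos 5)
The paper does not actually prove Theorem \ref{Tvancycles}: it is quoted as a special case of \cite[Theorem 3.14, Corollary 3.15]{Ob:vc}, so there is no in-paper argument to compare yours against line by line. Your outline does reproduce the correct global strategy of that reference (and of Raynaud's original argument when $|P|=p$): compute $2g_X-2+r=1$ on the generic fiber, decompose the special fiber along the tree $\ol{X}$, observe that \'{e}tale tails contribute $\sigma_b$ or $\sigma_b-1$ according to whether a branch point specializes to them (your Euler-characteristic explanation of the discrepancy between new and primitive tails is the right one), and show the interior contributions cancel.

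However, as a proof your proposal has a genuine gap, and it sits exactly where you say it does. The assertion that for interior wildly ramified components ``the higher ramification conductor at any node can be computed from either side of the node, so these contributions telescope'' is precisely the statement that fails to be elementary once $|P|=p^a$ with $a>1$. In Raynaud's setting the inertia above an interior component is $\ints/p$ (or trivial), and the ramification of $\ol{Y}\to\ol{Y}/P$ along such a component is governed by a single differential form (deformation datum), whose critical points can be counted directly; the matching of invariants across a node is then a local-global statement about one torsor. For cyclic $P$ of order $p^a$ the inertia above interior components can be any $\ints/p^i$, and one must track an entire tower of deformation data, distinguish multiplicative from additive reduction at each level, and prove compatibility of the resulting ``effective different'' across nodes where the inertia order jumps. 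None of this is supplied by your sketch; you explicitly defer it to \cite{Ob:vc}. So the proposal should be regarded as a correct road map to the known proof rather than a proof: the cancellation over interior components for $a>1$ is the theorem's real content, and citing \cite[Corollary 3.15]{Ob:vc} (as the paper does) or importing its deformation-data machinery is unavoidable.
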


\begin{remark}\label{Rpositiveterms}
Note that, by \cite[Lemma 4.2 (i)]{Ob:vc}, each term on the right hand side of (\ref{Evancycles}) is at least $\frac{1}
{m_G}$.  In particular, each term is positive.
\end{remark}

\subsection{The auxiliary cover}\label{Saux}
Retain the assumptions from the beginning of \S\ref{Sstable} (in particular, $G$ need not have a cyclic $p$-Sylow
subgroup).  Assume that $f: Y \to X$ is a $G$-cover 
defined over $K$ as in \S\ref{Sstable} with bad reduction, so that $\ol{X}$ is not just the original component.
By \cite[\S2.6]{Ob:fm2} (see also \cite[\S3.2]{Ra:sp}), we can construct an \emph{auxiliary cover} $f^{aux}: Y^{aux} \to X$ over some finite extension $K'$ of $K$.  
The cover $f^{aux}$ is a (connected) $G^{aux}$-cover, for some subgroup $G^{aux} \subseteq G$.  

The only properties we will need of $f^{aux}$ are the following: 
\begin{prop}\label{Pauxgroups}
\begin{description}
\item{(i)} For each branch point of $f$ of (branching) index $e$ with $v_p(e) \geq 1$, the cover $f^{aux}$ has a branch point of 
index $e'$ with $v_p(e') = v_p(e)$.
For each \'{e}tale tail of $\ol{X}$, the cover $f^{aux}$ may have a branch point of prime-to-$p$ index.  There are no other branch points of $f^{aux}$.  
\item{(ii)} If a $p$-Sylow subgroup of $G$ is \emph{cyclic}, then the group
$G^{aux}$ has a normal subgroup $N$ of prime-to-$p$ order such that $G/N \cong \ints/p^r \rtimes \ints/m$, with $p \nmid m$.  
\end{description}
\end{prop}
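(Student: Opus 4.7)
The plan is to derive both statements by unpacking the explicit construction of $f^{aux}$ given in \cite[\S 2.6]{Ob:fm2} (refining \cite[\S 3.2]{Ra:sp}), rather than reproving it from scratch. Recall that $f^{aux}$ is produced by formal patching: on the complement in $X^{st}$ of formal neighborhoods of the attaching points of the \'etale tails, one retains $f$ unchanged; over each such neighborhood, one substitutes a carefully designed cover whose special fiber is tame along the tail and whose boundary data at the formal annulus matches that of $f$. These local pieces glue to a $G^{aux}$-Galois cover $f^{aux}\colon Y^{aux}\to X$ over some finite extension $K'/K$, with $G^{aux}\subseteq G$ equal to the subgroup generated by the inertia subgroups arising from the patching.

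For (i), I would trace the branch locus through this cut-and-paste. A branch point of $f$ whose specialization lies on a non-\'etale-tail component is left untouched and hence is a branch point of $f^{aux}$ with the same inertia group; in particular the $p$-part of its index is preserved. A branch point specializing onto a primitive \'etale tail has its wild inertia retained by the design of the replacement cover (that is precisely why the replacement is engineered as it is), while its tame complement may change, which still yields $v_p(e')=v_p(e)$. On each \'etale tail the replacement cover can acquire at most one extra branch point, which by construction is tamely ramified, i.e.\ has prime-to-$p$ index. There is no further branching, because $f^{aux}=f$ outside the patched regions.

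For (ii), the cyclic $p$-Sylow hypothesis enters the construction in that the local replacement covers are chosen so that all of their inertia groups are isomorphic to $\ints/p^r\rtimes\ints/m$ with $p\nmid m$ and lie inside a fixed subgroup of $G$ of this shape (up to conjugation by prime-to-$p$ elements of $N_G(P)$). Since $f^{aux}$ is connected, $G^{aux}$ is generated by conjugates of these inertia subgroups. Letting $N\triangleleft G^{aux}$ be the subgroup generated by the prime-to-$p$ parts of all these inertia subgroups, the construction forces $|N|$ to be prime to $p$ and the quotient $G^{aux}/N$ to be generated by images of cyclic $p$-groups and a common tame part, giving $G^{aux}/N\cong\ints/p^r\rtimes\ints/m$ as asserted.

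The principal obstacle is not the book-keeping above but the formal-patching step that underlies the construction: one must verify that the local replacement covers can truly be chosen with inertia in the prescribed subgroup of $G$ and still be glued to a \emph{connected} $G^{aux}$-cover, which is the technical heart of \cite[\S 2.6]{Ob:fm2}. Granting that work, both (i) and (ii) follow by direct inspection of the branching and the inertia data produced by the patching.
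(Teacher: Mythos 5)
Your overall strategy coincides with the paper's: the proof given there consists entirely of citations, deriving (i) from the construction in \cite[\S 2.6]{Ob:fm2} and (ii) from \cite[Proposition 2.12]{Ob:fm2} together with \cite[Proposition 2.4(i)]{Ob:vc}. Your account of part (i) --- tracing the branch locus through the patching, noting that branch points away from the \'etale tails are untouched, that the wild part of the inertia is preserved at the modified branch points, and that each \'etale tail contributes at most one new tamely ramified branch point --- is a faithful unpacking of what that construction yields, and is fine.

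Part (ii), however, contains a genuine gap. You define $N$ to be the subgroup of $G^{aux}$ generated by the prime-to-$p$ parts of the inertia subgroups and assert that ``the construction forces $|N|$ to be prime to $p$'' and that $N$ is normal. Neither claim follows from the construction: a subgroup generated by elements of prime-to-$p$ order can perfectly well have order divisible by $p$ (the transpositions generate $S_3$), and there is no reason a priori that this generated subgroup is normal in $G^{aux}$. The existence of a normal prime-to-$p$ subgroup $N$ with $G^{aux}/N \cong \ints/p^r \rtimes \ints/m$ is a nontrivial structural fact about $G^{aux}$; it is exactly the content of the two cited results, which combine the $p$-solvability of $G^{aux}$ (coming from the fact that it is generated by inertia groups of the form $\ints/p^s \rtimes \ints/m'$, established in \cite[Proposition 2.12]{Ob:fm2}) with the classification of $p$-solvable groups having cyclic $p$-Sylow subgroup in \cite[Proposition 2.4(i)]{Ob:vc}. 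Your sketch replaces this group-theoretic input with an assertion that does not hold for general groups, so as written the argument for (ii) does not go through; you need to either invoke those results or supply the structure theory yourself.
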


\begin{proof}
Part (i) follows from the construction in \cite[\S2.6]{Ob:fm2}.
Part (ii) follows from \cite[Proposition 2.12]{Ob:fm2} and \cite[Proposition 2.4(i)]{Ob:vc}.
\end{proof}

\section{A one-point cover}\label{Sonepoint}
We maintain the notations $K$, $R$, and $k$ of \S\ref{Sprelims}.  We write $\ol{K}$ for the algebraic closure of $K$.

\begin{lemma}\label{Lexistence}
Let $\ell \ne p$ be odd primes such that $v_p(\ell^2 - 1) =: a \geq 2$.  If $G \cong PSL_2(\ell)$, then there exists a three-point $G$-cover $f:Y \to X = 
\proj^1_{\ol{K}}$ whose branch points have branching indices $e_1$, $e_2$, and $e_3$, with $0 = v_p(e_1) < v_p(e_2) < v_p(e_3)$.
\end{lemma}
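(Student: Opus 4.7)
The plan is to apply Riemann's existence theorem in characteristic zero.  Since $\ol{K}$ is algebraically closed of characteristic $0$, three-point $G$-Galois covers of $\proj^1_{\ol{K}}$ with branching indices $(e_1,e_2,e_3)$ correspond bijectively (up to isomorphism) to $G$-conjugacy classes of triples $(g_1,g_2,g_3) \in G^3$ satisfying $g_1 g_2 g_3 = 1$, $\langle g_1,g_2,g_3 \rangle = G$, and $\ord(g_i) = e_i$.  Thus it suffices to exhibit such a triple in $G = PSL_2(\ell)$ with $0 = v_p(e_1) < v_p(e_2) < v_p(e_3)$.

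Without loss of generality, assume $p^a \mid \ell - 1$ (otherwise swap the roles of $\ell-1$ and $\ell+1$ throughout).  Then the cyclic $p$-Sylow $P$ of $G$ has order $p^a$ and lies inside the split torus $T_s$ of order $(\ell-1)/2$.  I take $e_3 = p^a$, $e_2 = p$, and let $e_1 := \ord((g_2 g_3)^{-1})$.  Fix $g_3 \in T_s$ of order $p^a$; the task reduces to finding an order-$p$ element $g_2 \in G$ such that (i) $\langle g_2, g_3 \rangle = G$ and (ii) $v_p(\ord(g_2 g_3)) = 0$.

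For (i), by Dickson's classification of the subgroups of $PSL_2(\ell)$, any proper subgroup containing the cyclic group $\langle g_3 \rangle$ of order $p^a \geq 9$ is contained either in $N_G(T_s)$ or in a Borel subgroup $B$ containing $T_s$; the exceptional subgroups $A_4, S_4, A_5$ all have exponent less than $p^a$.  Because every element of $N_G(T_s) \setminus T_s$ is an involution and $p$ is odd, the first case would force $g_2 \in T_s$.  It therefore suffices to pick $g_2$ outside $T_s \cup B^+ \cup B^-$, where $B^{\pm}$ are the two Borels containing $T_s$.  A direct count shows that the number of order-$p$ elements in this bad union is $(p-1)(2\ell-1)$, compared with $(p-1)\ell(\ell+1)/2$ total order-$p$ elements in $G$, leaving $(p-1)(\ell-1)(\ell-2)/2 > 0$ candidates for $g_2$.

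For (ii), elements of $p$-divisible order in $G$ lie exclusively in conjugates of $T_s$: no non-split torus contains them (since $p \nmid \ell+1$) and the unipotent elements have order $\ell \ne p$.  In particular, such elements comprise a fraction $(p^a-1)/(2p^a) < 1/2$ of $G$.  To ensure (i) and (ii) hold for a single $g_2$, I would invoke Frobenius's formula
\[
\#\{(g_1,g_2,g_3) \in C_1 \times C_2 \times C_3 : g_1 g_2 g_3 = 1\} = \frac{|C_1||C_2||C_3|}{|G|} \sum_{\chi} \frac{\chi(C_1)\chi(C_2)\chi(C_3)}{\chi(1)},
\]
with $C_3$ the class of $g_3$, $C_2$ a class of order $p$, and $C_1$ a class of prime-to-$p$ order (for instance, a class of elements of order $(\ell+1)/2$ in a non-split torus).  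Using the well-known character table of $PSL_2(\ell)$, the sum is positive, so such a triple exists; generation is then enforced either via the complement count above or by a Scott-type subtraction of the contributions from the proper subgroups in Dickson's list.  The main obstacle is precisely this character-theoretic verification: while routine in principle, it requires careful bookkeeping of the characters of $PSL_2(\ell)$ across the relevant conjugacy classes, and of the choice of $C_1$ so that both positivity of the Frobenius sum and non-containment in any proper subgroup can be arranged simultaneously.
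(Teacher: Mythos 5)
Your overall strategy---reduce the lemma via Riemann's existence theorem to exhibiting a generating triple $(g_1,g_2,g_3)$ with $g_1g_2g_3=1$ and prescribed orders of $p$-valuations $0 < v_p(e_2) < v_p(e_3)$---is the same one the paper uses implicitly, and your subsidiary group theory is sound: the description of the maximal overgroups of $\langle g_3\rangle$ via Dickson, the fact that $N_G(T_s)\setminus T_s$ consists of involutions, and the count $(p-1)(\ell-1)(\ell-2)/2$ of order-$p$ elements outside $T_s\cup B^+\cup B^-$ all check out. But the decisive step is exactly the one you defer: producing a \emph{single} triple that simultaneously has product $1$, lies in the chosen classes, and generates. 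Choosing $g_2$ to generate together with $g_3$ says nothing about $\ord(g_2g_3)$, and the observation that $p$-divisible-order elements form less than half of $G$ does not control the distribution of the products $g_2g_3$. You correctly identify that a Frobenius structure-constant computation (plus a Scott-type correction for non-generating triples) is needed, but you assert its positivity without carrying it out; since the entire content of the lemma is the existence of such a triple, this is a genuine gap rather than a routine omission.

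The paper closes precisely this gap by outsourcing it: it works in $H=SL_2(\ell)$ (descending to $PSL_2(\ell)$ by quotienting by $\{\pm1\}$), chooses three explicit \emph{semisimple} conjugacy classes with eigenvalue parameters $\frac{\ell\pm1}{2}-1$ and $\frac{\ell\mp1}{2}-p$ (giving order valuations $0$, $a-1$, $a$ rather than your $0$, $1$, $a$), verifies the numerical hypothesis $2p+4<\ell-1$, and then cites \cite[Proposition 5.6]{BW:mc}, which is exactly the character-theoretic count of generating triples for $SL_2(\ell)$ that you describe as ``the main obstacle.'' If you want to complete your argument along your own lines, you should either perform the structure-constant estimate for your classes (the character table of $PSL_2(\ell)$ makes this feasible but not free, especially with a unipotent-adjacent choice like order exactly $p$ versus the regular semisimple classes the paper uses), or, more efficiently, adjust your classes so that an existing rigidity/counting result such as \cite[Proposition 5.6]{BW:mc} applies directly.
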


\begin{proof}
If $H \cong SL_2(\ell)$, it suffices to exhibit a three-point $H$-cover with branching indices $e_1'$, $e_2'$, and $e_3'$
satisfying $0 = v_p(e_1') < v_p(e_2') < v_p(e_3')$.  We obtain the desired $G$-cover by quotienting out by $\{ \pm 1 \}$ (as for $i \in \{1, 2, 3\}$, we
will have $e_i = e_i'$ or $e_i = e_i'/2$, depending on the action of $\{ \pm 1 \}$ above the relevant branch point). 
As in \cite[\S3]{BW:mc}, pick an $(\ell -1)$st root of unity $\zeta \in \FF_{\ell}$ and an $(\ell + 1)$st root of unity $\tilde{\zeta} \in \FF_{\ell^2}$.  
Then for $0 < i < \frac{\ell-1}{2}$ (resp.\ $0 < i < \frac{\ell+1}{2}$),
we write $C(i)$ (resp.\ $\tilde{C}(i)$) for the unique conjugacy class of elements of $H$ with eigenvalues $\zeta^{\pm i}$ (resp.\ $\tilde{\zeta}^{\pm i}$).  In 
particular, if $x \in C(i)$ (resp.\ $\tilde{C}(i)$), then $v_p(\ord(x)) = \max(0, v_p((\ell-1)/i))$ (resp.\ $\max(0, v_p((\ell+1)/i))$). 

We construct a triple of conjugacy classes $C = (C_1, C_2, C_3)$ giving rise to the desired cover.  If $v_p(\ell -1) = a$, then pick $C = (\tilde{C}(\frac{\ell+1}{2} - 1), C(\frac{\ell-1}{2} - p), 
C(\frac{\ell-1}{2} - 1))$.  If $v_p(\ell + 1) = a$, then pick $C = (C(\frac{\ell-1}{2} - 1), \tilde{C}(\frac{\ell+1}{2} - p), 
\tilde{C}(\frac{\ell+1}{2} - 1))$.  Note that since $p^2$ divides either $\ell-1$ or $\ell+1$, and $p$ and $l$ are odd, then $2p^2 \leq \ell+1$.  In particular, 
one easily checks $2p + 4 < \ell-1$.  
Now, \cite[Proposition 5.6]{BW:mc} shows that there exist two isomorphism classes of three-point $H$-covers with inertia groups over the three branch 
points lying in $C_1$, $C_2$, and $C_3$ (our $\ell$ is called $p$ in \cite{BW:mc}).  Checking the ramification indices yields
$v_p(e_1') = 0$, $v_p(e_2') = a - 1$, and $v_p(e_3') = a$.
\end{proof}

\begin{lemma}\label{Lnewtail}
The stable reduction of the $G$-cover $f$ in Lemma \ref{Lexistence} has one new \'{e}tale tail and one primitive \'{e}tale tail. 
The new tail $\ol{X}_b$ has effective ramification invariant $\sigma_b = \frac{3}{2}$.
\end{lemma}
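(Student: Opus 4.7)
The main tool will be the vanishing cycles formula (Theorem \ref{Tvancycles}), applied to $f$. Since $G$ has nontrivial cyclic Sylow $p$-subgroup $P$ of order $p^a \geq p^2$ and two of the branch points have $p$-divisible branching indices, the cover $f$ has bad reduction, so Theorem \ref{Tvancycles} applies. A preliminary computation gives $m_G = 2$ for $G = PSL_2(\ell)$: the Sylow $p$-subgroup $P$ is the unique $p$-Sylow of a maximal cyclic torus $T$ of order $(\ell \pm 1)/2$, the centralizer $C_G(P)$ equals $T$ (since the centralizer of a nontrivial semisimple element of $PSL_2(\ell)$ is its containing torus), while $N_G(P) = N_G(T)$ is dihedral of order $\ell \pm 1$. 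Hence by Remark \ref{Rpositiveterms}, every $\sigma_b \geq 1/2$, and in fact $\sigma_b \in \frac{1}{2}\ints_{>0}$.

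Next, to count the tails I would pass to the auxiliary cover $f^{aux}$ (Proposition \ref{Pauxgroups}) and further to its quotient by the normal prime-to-$p$ subgroup $N$, obtaining a $(\ints/p^a \rtimes \ints/m)$-cover sharing the same tail structure as $\ol{f}$. For such cyclic-by-tame covers the stable reduction organizes into a tree of $\proj^1$'s of depth $a$: the branch point with maximal $p$-valuation $v_p(e_3) = a$ specializes at the end of the tree on an \'{e}tale primitive tail, while the branch point with $v_p(e_2) = a-1$ specializes at depth $a-1$ on an interior component where the cover remains generically ramified, and hence is not an \'{e}tale tail. Exactly one further ``new'' tail is then required to complete a semistable model, yielding $|B_{\text{prim}}| = |B_{\text{new}}| = 1$.

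Finally, for the invariants themselves: the branch point lying above the primitive tail carries the full Sylow $p$-subgroup as inertia in characteristic zero, so the residual ramification at the tail's intersection with the rest of $\ol{X}$ attains its minimum possible value, giving $\sigma_{\text{prim}} = 1/m_G = 1/2$. Substituting into vanishing cycles, the equation $1 = (\sigma_{\text{new}} - 1) + 1/2$ yields $\sigma_{\text{new}} = 3/2$.

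The main obstacle is the tree analysis in the second paragraph: rigorously establishing (i) that the branch point with $v_p(e_2) = a - 1$ does \emph{not} yield a primitive tail, and (ii) that exactly one new tail appears. Both claims will require a detailed study of the auxiliary cover's local models along the lines of \cite{Ob:vc}. Pinning down $\sigma_{\text{prim}} = 1/2$ is similarly delicate and reduces to identifying the minimal possible upper jump in the local $P$-extension above the primitive tail's intersection with the rest of $\ol{X}$.
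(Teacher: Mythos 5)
Your proposal has the right ambient tools (vanishing cycles, $m_G=2$, the auxiliary cover) but the two steps that actually carry the lemma are missing or wrong. First, the primitive \'{e}tale tail corresponds to the branch point with \emph{prime-to-$p$} index $e_1$, not to the point with $v_p(e_3)=a$: a branch point whose index is divisible by $p$ cannot specialize to an \'{e}tale tail, and the count ``exactly one primitive tail'' is exactly the content of \cite[Proposition 2.15]{Ob:vc} and \cite[Proposition 2.4.8]{Ra:sp} applied to the unique prime-to-$p$ branch point. Consequently your justification of $\sigma_{\text{prim}}=1/2$ (``carries the full Sylow $p$-subgroup as inertia in characteristic zero, so the residual ramification attains its minimum'') rests on a misidentification and on an unproved minimality claim; in the paper $\sigma_{\text{prim}}=1/2$ is an \emph{output} of the vanishing cycles formula, not an input. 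Second, the ``tree of $\proj^1$'s of depth $a$'' picture in your middle paragraph is asserted rather than proved, and you acknowledge as much; but that assertion is precisely the content of the lemma, so as written the argument is circular at its core.

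The idea you are missing is the paper's proof that a new tail \emph{exists}, which is where the hypothesis $0=v_p(e_1)<v_p(e_2)<v_p(e_3)$ from Lemma \ref{Lexistence} is actually used. Suppose the primitive tail were the only \'{e}tale tail. By Proposition \ref{Pauxgroups}(i), $f^{aux}$ is then branched at the two points of index $e_2,e_3$ and possibly one prime-to-$p$ point; the Hurwitz formula forces this third branch point to occur (else $e_2=e_3$). Since $f^{aux}$ lives in characteristic zero, $G^{aux}$ is generated by an element of order $e_1$ and an element of order $e_2$ (drop one of the three topological generators). But Proposition \ref{Pauxgroups}(ii) gives a quotient $\ints/p^r\rtimes\ints/m$ of $G^{aux}$ with $r\geq v_p(e_3)>v_p(e_2)$, and such a group cannot be generated by elements of orders dividing $e_1$ and $e_2$ --- contradiction. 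Once one new tail exists, you should use the full strength of Remark \ref{Rpositiveterms}: each \emph{term} of (\ref{Evancycles}) is at least $1/m_G=1/2$, so a new tail satisfies $\sigma_b-1\geq 1/2$, i.e.\ $\sigma_b\geq 3/2$, and the primitive tail satisfies $\sigma_b\geq 1/2$. The formula $1=\sum(\sigma_b-1)+\sum\sigma_b$ then forces exactly one new tail with $\sigma_b=3/2$ (and $\sigma_{\text{prim}}=1/2$) with no further local analysis. Your weaker bound ``every $\sigma_b\geq 1/2$'' is not enough to rule out several new tails.
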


\begin{proof}
By \cite[Proposition 2.15]{Ob:vc} and \cite[Proposition 2.4.8]{Ra:sp}, there exists exactly one primitive \'{e}tale tail of $\ol{X}$.  
We note that $m_G = 2$, so Remark \ref{Rpositiveterms} shows that the effective ramification invariant of a new \'{e}tale tail must be at least $\frac{3}{2}$, whereas the invariant of a 
primitive \'{e}tale tail must be at least $\frac{1}{2}$.
If we can show that there exists a new \'{e}tale tail, then the vanishing cycles formula (\ref{Evancycles}) shows that
there is only one, and it has invariant $\frac{3}{2}$.  So we need only show that there exists a new \'{e}tale tail.

Consider the auxiliary cover $f^{aux}: Y^{aux} \to X$ of $f$.  Assume, for a contradiction, that the primitive tail is the only \'{e}tale tail.  
Then, by Proposition \ref{Pauxgroups}(i), the cover $f^{aux}$ is branched at two points with indices $e_2$ and $e_3$ such that $0 < v_p(e_2) < v_p(e_3)$, and possibly
at a third point with index $e_1$ such that $p \nmid e_1$.  In fact, $f^{aux}$ must be branched at this third point, because if it were 
branched at only two points, the Hurwitz formula would imply $e_2 = e_3$.
By the basic theory of the fundamental group in characteristic zero, the Galois group $G^{aux}$ of $f^{aux}$ 
can be generated by an element of order $e_1$ and an element of order $e_2$.  
By Proposition \ref{Pauxgroups}(ii), $G^{aux}$ has a quotient of the form $\ints/p^r \rtimes \ints/m$, for some $r$ and $m$, where $r = v_p(|G^{aux}|) \geq v_p(e_3) > v_p(e_2)$.  
But such a group cannot be generated by an element of order dividing $e_2$ and an element of order dividing $e_1$.  This is a contradiction.        
\end{proof}

\begin{remark}\label{Ralternateproof}
One can also prove Lemma \ref{Lnewtail} using a rigidity argument, along the lines of \cite[Proposition 2.8]{BP:rr}.
\end{remark}

\begin{theorem}\label{Tonepointcover}
Let $p \geq 7$ be an odd prime and $\ell$ an odd prime such that $v_p(\ell^2 - 1) = a \geq 1$.  Take $G = PSL_2(\ell)$.  Then for any algebraically closed field $k$ of characteristic $p$, 
there exists a $G$-cover of $\proj^1_k$ branched at one point whose inertia groups are dihedral of order $2p$ with higher ramification 
filtration ($\frac{3}{2}$).
\end{theorem}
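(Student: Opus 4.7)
The plan is to extract the desired one-point $G$-cover directly from the stable reduction of the three-point cover constructed in Lemma \ref{Lexistence}. The case $a = 1$ is handled by \cite[Theorem 3.6]{BP:rr}, so I assume $a \geq 2$. Let $f: Y \to X$ be the three-point cover from Lemma \ref{Lexistence}, and let $\ol{f}: \ol{Y} \to \ol{X}$ be its stable reduction. By Lemma \ref{Lnewtail}, $\ol{X}$ has exactly one new \'etale tail $\ol{X}_b$, with effective ramification invariant $\sigma_b = 3/2$; write $\ol{x}_b$ for its point of intersection with the rest of $\ol{X}$. Choosing a component $\ol{Y}_b \subset \ol{Y}$ above $\ol{X}_b$ and a point $\ol{y}_b \in \ol{Y}_b$ above $\ol{x}_b$, I will argue that $\ol{Y}_b \to \ol{X}_b \cong \proj^1_k$ is already the desired $G$-cover.

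The map $\ol{Y}_b \to \ol{X}_b$ is Galois with group equal to the decomposition group $D_b \subseteq G$ of $\ol{Y}_b$, is branched only at $\ol{x}_b$ (since $\ol{X}_b$ is \'etale), and has inertia $I$ at $\ol{y}_b$ whose higher ramification conductor equals $\sigma_b = 3/2$ by the very definition of $\sigma_b$. The next step is to identify $I$ as the dihedral group of order $2p$ with filtration $(3/2)$. Since $m_G = 2$, I can write $I = P \rtimes \ints/m$ with $m \in \{1,2\}$. If $m = 1$, then $I = P$ is abelian and Hasse--Arf \cite[IV]{Se:lf} forces the conductor to be an integer, contradicting $3/2$; hence $m = 2$ and $I$ is dihedral of order $2p^r$ for some $r \geq 1$. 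To pin $r = 1$, I would carry out a direct ramification calculation (via Herbrand's $\psi_{L/K}$, exploiting that the lower jumps of $I$ and of its maximal $\ints/p^r$-subextension coincide), showing that any such dihedral extension with $r \geq 2$ has top upper jump strictly greater than $3/2$; equivalently, $\sigma_b = 3/2$ saturates the lower bound $\sigma_b - 1 \geq 1/m_G$ of \cite[Lemma 4.2(i)]{Ob:vc}, and the saturation forces $|P| = p$.

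Finally I identify $D_b = G$. As a one-point cover of $\proj^1_k$, the map $\ol{Y}_b \to \ol{X}_b$ forces $D_b$ to be quasi-$p$. A direct check using Dickson's classification of subgroups of $PSL_2(\ell)$ shows that, for $p \geq 7$, the only quasi-$p$ subgroups of $G$ are the cyclic $p$-subgroups of a $p$-Sylow and $G$ itself: the dihedrals, subgroups of Borels, and the exceptional subgroups $A_4$, $S_4$, $A_5$ all fail to be quasi-$p$ (either $p \geq 7$ does not divide their order, or the subgroup they generate via their $p$-elements is proper). Since $D_b$ contains $I$, which has an element of order $2$, $D_b$ is not a $p$-group, and so $D_b = G$. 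The main obstacle is the ramification calculation pinning down $r = 1$; the remaining steps amount to assembling the preceding lemmas together with this group-theoretic input.
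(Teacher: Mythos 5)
Your proposal follows essentially the same route as the paper: extract the cover from the new \'etale tail of the stable reduction of the three-point cover of Lemma \ref{Lexistence}, use $\sigma_b = \tfrac{3}{2} \notin \ints$ together with Hasse--Arf to rule out abelian inertia, force $r=1$ by a growth bound on the conductor, and conclude $D_b = G$ from quasi-$p$ considerations. Two points in your write-up need repair, though neither is fatal. First, the step pinning $r=1$ is only promised, not carried out; the correct version of your sketch is that $u_1 \geq 1/m_I = 1/2$ and $u_i \geq pu_{i-1}$ force the conductor to be at least $p^{r-1}/2$, which exceeds $\tfrac32$ once $r \geq 2$ and $p \geq 7$ --- this is precisely \cite[Lemma 4.2(iii)]{Ob:vc}, which the paper cites; your alternative phrasing that ``saturation of the bound $\sigma_b - 1 \geq 1/m_G$ forces $|P|=p$'' is not a correct equivalence. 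Second, your classification of quasi-$p$ subgroups of $PSL_2(\ell)$ is wrong when $p \mid \ell - 1$: the subgroup $\ints/\ell \rtimes \ints/p^{v_p(\ell-1)}$ of a Borel \emph{is} generated by its $p$-Sylow subgroups (the commutators $[\sigma, x]$ for $\sigma$ of order $p$ and $x$ of order $\ell$ already generate the normal $\ints/\ell$), so it is quasi-$p$. Your conclusion $D_b = G$ survives because these extra quasi-$p$ subgroups have odd order while $D_b \supseteq I \cong D_p$ contains an involution --- which is exactly why the paper states the group-theoretic input as ``the only quasi-$p$ subgroup of $G$ \emph{containing a dihedral group of order $2p$} is $G$ itself.''
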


\begin{proof}
If $a=1$ this is \cite[Proposition 2.8]{BP:rr}.  So assume $a \geq 2$.
By Lemmas \ref{Lexistence} and \ref{Lnewtail}, there exists a three-point $G$-cover $f: Y \to X = \proj^1_K$ whose stable reduction has a new tail 
$\ol{X}_b$ with effective ramification invariant $\sigma_b = \frac{3}{2}$.
Let $\ol{Y}_b$ be a component above $\ol{X}_b$, and let $I$ be an inertia group of $\ol{Y}_b \to \ol{X}_b$ above the unique branch point.  
We know that $I$ is of the form $\ints/p^r \rtimes \ints/m$ for some $m$ with $p \nmid m$.  
As $m_G = 2$, \cite[Lemma 4.2(iii)]{Ob:vc} shows that $\sigma_b \geq \frac{p^{r-1}}{2}$.  Since $p \geq 7$, we conclude that $r=1$.
Furthermore, since $\sigma_b \notin \ints$, the Hasse-Arf theorem shows that $I$ is not abelian.  By \cite[II, Hauptsatz 8.27]{Hu:eg}, the only nonabelian subgroup of $G$ of the form
$\ints/p \rtimes \ints/m$ is the dihedral group $D_p$ of order $2p$.  So $I \cong D_p$. 

It remains to show that the Galois group of $\ol{Y}_b \to \ol{X}_b$ is $G$, as then our cover will be given by $\ol{Y}_b \to \ol{X}_b$.  
But, since $p \geq 7$, then \cite[II, Hauptsatz 8.27]{Hu:eg} shows that the only quasi-$p$ subgroup of $G$ containing a dihedral group of order 
$2p$ is $G$ itself.  So we are done. 
\end{proof}

\begin{remark}\label{Rdirichlet}
By Dirichlet's theorem on arithmetic progressions, there are infinitely many $\ell$ satisfying the condition of Theorem \ref{Tonepointcover}
for any given $p$.
\end{remark}

\begin{corollary}\label{Cbiggergroups}
Let $k$ be an algebraically closed field of characteristic $p \geq 7$ and $G \cong PSL_2(\ell)$, where $p \, | \, |G|$. 
Suppose $I$ is either a cyclic group of order $p^r$ or a dihedral group $D_{p^r}$ of order $2p^r$, with $1 \leq r \leq v_p(|G|)$.  Then there exists a
$G$-cover $f: Y \to \proj^1_k$ branched at one point with inertia groups isomorphic to $I$.  
\end{corollary}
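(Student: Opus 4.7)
The plan is to start from Theorem \ref{Tonepointcover}, which provides a one-point $G$-cover $f_0 \colon Y_0 \to \proj^1_k$ with inertia group $D_p$ and upper jump $3/2$, and to use formal patching to modify the local structure at the unique branch point $x_0$ in order to realize the other prescribed inertia groups.

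First I would verify the group-theoretic input: the target groups $\ints/p^r$ and $D_{p^r}$ for $1 \leq r \leq a := v_p(|G|)$ embed into $G \cong PSL_2(\ell)$, since the normalizer in $G$ of a $p$-Sylow subgroup $P$ is itself dihedral of order $2p^a$, hence contains both $\ints/p^r$ and $D_{p^r}$ for each such $r$. Next, for each $I$ I would construct a local totally ramified $I$-Galois extension of $k[[t]]$ with prescribed upper ramification filtration: via Artin--Schreier--Witt theory for $I = \ints/p^r$, and by adjoining a compatible involution in the case $I = D_{p^r} = \ints/p^r \rtimes \ints/2$.

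The main step is a formal patching argument in the spirit of Harbater's proof of Abhyankar's conjecture (\cite{Ha:ac}). The cover $f_0$ is étale over $\proj^1_k \setminus \{x_0\}$, and near $x_0$ it is the $G$-induction of a local $D_p$-cover via the coset space $G/D_p$. To produce a one-point $G$-cover with inertia $I$, I would remove the formal neighborhood of $x_0$ from $f_0$ and glue in the $G$-induction of the local $I$-cover constructed above, matching along the punctured formal neighborhood of $x_0$. Because $G$ is simple and $I$ is a nontrivial subgroup, $I$ normally generates $G$, so any connected global cover produced this way automatically has Galois group exactly $G$; connectedness is guaranteed by the fact that the local $I$-extension is totally ramified.

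The main obstacle is the compatibility required for the patching: the local $I$-cover must agree on the punctured formal neighborhood with (a suitable modification of) the étale part of $f_0$. This compatibility is subtler in the cyclic case $I = \ints/p^r$, since $\ints/p^r$ is not a supergroup of $D_p$, and may require simultaneously deforming the étale $G$-torsor over $\proj^1_k \setminus \{x_0\}$ inherited from $f_0$. The combined flexibility of choosing the local $I$-cover and adjusting the étale structure away from $x_0$ is what should allow each permitted $I$ to be realized; tracking this compatibility also controls the resulting upper ramification filtration, leading to the companion result Corollary \ref{Chigherram}.
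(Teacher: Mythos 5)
Your dihedral case is essentially the paper's approach: the paper also starts from Theorem \ref{Tonepointcover} and invokes Harbater's formal-patching/embedding-problem machinery (\cite[Theorem 3.6]{Ha:ab}) to enlarge the inertia from $D_p$ to $D_{p^r}$. But the step you flag as "subtler" in the cyclic case is in fact a genuine gap, not a technicality. Formal patching of this kind can only \emph{enlarge} inertia: the restriction of the \'etale part of $f_0$ to the punctured formal disk at $x_0$ is a fixed $G$-cover of $\Spec k((t))$ with decomposition group (conjugate to) $D_p$, and any local cover you glue in must restrict to this same object. That forces the new inertia group $I'$ to contain $D_p$, and the solvable case of the resulting embedding problem is precisely when $I'$ is an extension of $D_p$ by a normal $p$-group --- which holds for $I' = D_{p^r}$ (kernel $\ints/p^{r-1}$) but manifestly fails for $I' = \ints/p^r$, which does not even contain a subgroup isomorphic to $D_p$. "Simultaneously deforming the \'etale $G$-torsor away from $x_0$" is not a construction; there is no patching statement in the literature (nor in \cite{Ha:ac} or \cite{Ha:ab}) that lets you shrink or replace the prime-to-$p$ part of the inertia this way.

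The paper closes this gap by a different, elementary device: having produced a cover with inertia $D_{p^r} = \ints/p^r \rtimes \ints/2$ branched at $\infty$, it pulls back along the tame degree-$2$ map $y = x^2$ and applies Abhyankar's Lemma (\cite[Lemma X.3.6]{Gr:SGA1}), which absorbs the tame $\ints/2$ factor and leaves inertia $\ints/p^r$ (and preserves the wild higher ramification data, which is what feeds into Corollary \ref{Chigherram}). Your proposal needs either this pullback trick or some substitute for it; as written, the cyclic case does not go through. Two smaller points: your claim that connectedness of the patched cover follows from total ramification of the local piece is not justified (in the induced, disconnected local cover one must argue irreducibility of the global object separately, as is done in the thickening framework), and you should also note that the case $v_p(|G|)=1$ is already \cite[Theorem 3.6]{BP:rr}.
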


\begin{proof}
If $v_p(|G|) = 1$, then this is \cite[Theorem 3.6]{BP:rr}.  Suppose $v_p(|G|) > 1$.
In the case $I \cong D_p$, such a cover is given by Theorem \ref{Tonepointcover}.  If $I \cong D_{p^r}$ with $r \geq 1$, the existence of such a cover 
follows from \cite[Theorem 3.6]{Ha:ab} (taking $G=H$, $H_i = D_p$, and $H_i' = D_{p^r}$ in that theorem).  
Consider a one-point $G$-cover $f: Y \to \proj^1_k$ with inertia groups 
isomorphic to $D_{p^r}$.  Assume, without loss of
generality, that the branch point is $\infty$.  If we base change this cover by the map $\proj^1_k \to \proj^1_k$ given by $y = x^2$, then Abyhankar's 
Lemma (\cite[Lemma X.3.6]{Gr:SGA1}) shows that the inertia groups of the new cover are cyclic of order $p^r$.
\end{proof}

\begin{remark}\label{Rconductor3}
Applying Abhyankar's lemma does not change the higher ramification filtration of the $\ints/p^r$-subextension.  In particular, 
if $I \cong \ints/p$, we can take the higher ramification filtration to be $(3)$.  Alternatively, one can use \cite[Theorem 3.1]{BP:rr} to exhibit 
a cover with inertia groups $I \cong \ints/p$ and higher ramification filtration $(3)$.  Furthermore, if $\ell \equiv \pm 1 \pmod{8}$ and $I \cong \ints/p$,
then the exact same proof as \cite[Proposition 2.9]{BP:rr} (with our vanishing cycles formula (\ref{Evancycles}) substituting for the vanishing
cycles formula used there) shows that there is a cover with higher ramification filtration $(2)$.
\end{remark}

\begin{remark}\label{Rabhyankar}
This does not fully prove Abhyankar's inertia conjecture for groups $PSL_2(\ell)$ as in Theorem \ref{Tonepointcover} because we do not realize cyclic 
groups of non-$p$-power order as inertia groups (cf.\ \cite[end of \S3.1]{BP:rr}).  But note that if $I \cong \ints/p^r$ or $D_{p^r}$, 
then all subgroups of $PSL_2(\ell)$ isomorphic to $I$ are conjugate.  So the conjecture will be proven in this 
case if we can find covers with all possible isomorphism classes of cyclic inertia groups.
\end{remark}

\section{Higher ramification filtrations}\label{Shigher}
In addition to the inertia groups, one wants to understand the higher ramification filtrations of one-point covers of $\proj^1$, for the reasons mentioned in (\S\ref{Sram}).  
More generally, let $f: Y \to X$ be a one-point $G$-cover of curves over an algebraically closed field $k$ of characteristic $p$.
Assume that a $p$-Sylow subgroup of $G$ is cyclic and the inertia groups of $f$ are isomorphic to $I \cong \ints/p^r \rtimes \ints/m$,
where $p \nmid m$.  Recall that $m_I := |N_I(\ints/p^r)/Z_I(\ints/p^r)|$.  Recall also from \S\ref{Sram} that the higher ramification filtration 
for the upper numbering at any ramification point is determined by 
its sequence of positive upper jumps $(u_1, \ldots, u_r)$.  The following theorem places restrictions on this sequence:

\begin{theorem}[\cite{OP:wc}, Theorem 1.1]\label{Tadmissible}
With notations as above,  
the sequence $(u_1, \ldots, u_r)$ occurs as the upper higher ramification filtration of $I$ if and only if:
\begin{description}
\item{(a)} $u_i \in \frac{1}{m} \nats$ for $1 \leq i \leq r$;
\item{(b)} $\gcd(m, mu_1)= m/m_I$;
\item{(c)} $p \nmid mu_1$ and, for $1 < i \leq r$, either $u_i=pu_{i-1}$ or both $u_i > pu_{i-1}$ and $p \nmid mu_i$;
\item{(d)} $mu_i \equiv mu_1 \pmod{m}$ for $1 \leq i \leq r$.
\end{description}
\end{theorem}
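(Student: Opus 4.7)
The plan is to work with a totally ramified Galois extension $L/K$ of complete discretely valued fields with $\Gal(L/K) = I$ and to reduce to the cyclic $p$-power subextension. Let $P \cong \ints/p^r$ be the wild inertia subgroup of $I$ and set $K_0 = L^P$, so that $K_0/K$ is tame cyclic of degree $m$ with $\Gal(K_0/K) \cong C := \ints/m$, while $L/K_0$ is cyclic of degree $p^r$. A standard computation with Herbrand's $\psi$-function (using that $K_0/K$ is tame of degree $m$) shows that the upper jumps $u_i$ of $L/K$ on $P$ and the upper jumps $u_i^{(0)}$ of $L/K_0$ are related by $u_i^{(0)} = m u_i$. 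Since each $u_i^{(0)}$ is a positive integer by Hasse-Arf, this immediately yields condition (a).

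Next I would classify $L/K_0$ by Artin-Schreier-Witt theory: the extension corresponds to a class $\underline{f} = (f_1,\ldots,f_r) \in W_r(K_0)/\wp(W_r(K_0))$, and after passing to a ``reduced'' representative the Brylinski-Kato formula computes $u_i^{(0)}$ in terms of the pole orders $-v_{K_0}(f_j)$ for $j\leq i$. Reducedness forces $-v_{K_0}(f_1)$ to be prime to $p$, and for each $i > 1$ either $u_i^{(0)} = p u_{i-1}^{(0)}$ (the ``continuation'' case in which $f_i$ is internally determined by $f_{i-1}$) or $-v_{K_0}(f_i) > -p v_{K_0}(f_{i-1})$ with pole order prime to $p$. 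Translating via $u_i = u_i^{(0)}/m$ yields condition (c).

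Third, I would exploit the $C$-action: because $P$ is normal in $I$, the quotient $C$ acts on $P$ by conjugation with image of order $m_I$ inside $\Aut(P) = (\ints/p^r)^\times$, and it also acts on $L/K_0$ Galois-equivariantly. At the level of Artin-Schreier-Witt, the class of $\underline f$ must transform under a generator $\sigma\in C$ by a character $\chi : C \to (\ints/p^r)^\times$ whose image is exactly the order-$m_I$ subgroup. Choosing a uniformizer $\pi_0$ of $K_0$ with $\sigma(\pi_0) = \zeta \pi_0$ for a primitive $m$-th root of unity $\zeta$, and writing the leading monomial of a reduced $f_i$ as $c_i \pi_0^{-m u_i}$, the $C$-equivariance forces the exponents $m u_i$ to lie in a common residue class modulo $m$ (giving (d)), with $\gcd(m u_1, m) = m/m_I$ (giving (b)).

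For the ``if'' direction, one reverses the construction: given $(u_1,\ldots,u_r)$ satisfying (a)--(d), I would take $f_i = c_i\pi_0^{-m u_i}$ with $c_i$ chosen so that $\underline f$ is $C$-equivariant (possible precisely by (b) and (d)) and then invoke Brylinski-Kato to verify that the upper jumps come out correctly. The main obstacle will be the degenerate branch of (c) with $u_i = p u_{i-1}$, where the $i$-th Witt entry is not an independent pole order but is determined within the reduction of $\underline f$; care is needed to ensure that no spurious intermediate jumps appear and that Hasse-Arf is automatic. A secondary subtlety is verifying that the $C$-equivariance condition does not over-determine $\underline f$, which reduces to a cohomology vanishing statement that follows from $\gcd(m,p)=1$.
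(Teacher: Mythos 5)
The paper does not actually prove this statement: it is quoted verbatim from \cite{OP:wc}, Theorem 1.1, and the only ``proof'' in the present paper is that citation. Your strategy --- passing to the cyclic $p^r$-subextension over the tame degree-$m$ subfield (where Herbrand's function gives $u_i^{(0)}=mu_i$ and Hasse--Arf gives (a)), reading off (c) from reduced Artin--Schreier--Witt pole orders, and extracting (b) and (d) from equivariance of the class under the order-$m$ quotient acting through an order-$m_I$ character --- is essentially the argument of that reference, as one can see from its summary in Theorem \ref{Texplicit} here (degrees of the $x_i$ in a common residue class modulo $m$ and prime to $p$, with $c(x_i)/x_i$ an $m_I$th root of unity); the points you flag as delicate (the degenerate branch $u_i=pu_{i-1}$ and realizing the correct semidirect-product action in the converse direction) are exactly what \cite{OP:wc} handles via its explicit polynomials $f_i$.
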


Call a sequence $(u_1, \ldots, u_r)$ \emph{$I$-admissible} if it can occur as a sequence of positive upper jumps for $I$, based on Theorem 
\ref{Tadmissible}.  There is a partial order on $I$-admissible sequences given by
$(u_1, \ldots, u_r) \leq (u_1', \ldots, u_r')$ iff $u_i \leq u_i'$ for all $1 \leq i \leq r$.  The main result of this section is the following:

\begin{prop}\label{Pthickening}
Let $f: Y \to X$ be a one-point $G$-cover as above with inertia groups isomorphic to $I$ having upper higher ramification filtration
$\Sigma = (u_1, \ldots, u_r)$. 
Let $\Sigma' = (u_1', \ldots, u_r') \geq \Sigma$ be an $I$-admissible sequence such that $mu_1 \equiv mu_1' \pmod{m}$.  
Then there exists a one-point $G$-cover $f': Y' \to X$ with inertia groups isomorphic to $I$ having the sequence $\Sigma'$ of positive upper jumps in the 
higher ramification filtration.
\end{prop}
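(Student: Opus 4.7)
My approach is to construct $f'$ via a formal patching argument: I would replace the local $I$-cover of $f$ at its branch point $x_0$ by one with filtration $\Sigma'$, and adjust the complementary étale $G$-cover on $X \setminus \{x_0\}$ so that the two pieces remain glueable. Writing $\hat{X}_0 := \Spec \hat{\mc{O}}_{X,x_0} \cong \Spec k[[t]]$ and $\hat{X}_0^\ast := \Spec k((t))$, the formal completion of $f$ at $x_0$ is the $G$-induction of an $I$-Galois extension $\hat{Y}_0 \to \hat{X}_0$ of upper filtration $\Sigma$.

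First, I would use Theorem \ref{Tadmissible} to produce an $I$-Galois extension $\hat{Y}_0' \to \hat{X}_0$ of filtration $\Sigma'$. The hypothesis $m u_1 \equiv m u_1' \pmod{m}$ is precisely what is needed so that conditions (b) and (d) of Theorem \ref{Tadmissible} produce matching tame data for both extensions: in particular, the $\ints/m$-quotients of $\hat{Y}_0$ and $\hat{Y}_0'$ coincide as $k((t))$-extensions after choosing a common uniformizer, so the two local covers become isomorphic on their tame sub-covers. Second, I would apply a formal patching theorem in the spirit of Harbater \cite{Ha:ab} to assemble $\hat{Y}_0'$ (induced to a $G$-cover of $\hat{X}_0$) together with a suitable étale $G$-cover of $X \setminus \{x_0\}$, obtained by modifying the étale cover underlying $f$ so that its restriction to $\hat{X}_0^\ast$ matches the étale restriction of the $G$-induction of $\hat{Y}_0'$. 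The patching produces a $G$-cover $f' : Y' \to X$ branched only at $x_0$ whose formal completion at $x_0$ is the $G$-induction of $\hat{Y}_0'$, so that $f'$ has inertia $I$ and filtration $\Sigma'$ as required.

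The main obstacle will be establishing the patching data's compatibility: namely, showing that the étale $G$-cover of $X \setminus \{x_0\}$ can indeed be modified to match the new local data without shrinking its Galois group or introducing additional branch points. The congruence $m u_1 \equiv m u_1' \pmod{m}$ is the key ingredient, since it makes the tame Kummer behavior on $\hat{X}_0^\ast$ common to $\hat{Y}_0$ and $\hat{Y}_0'$; this ensures the required modification of the global étale cover is confined purely to its wild decomposition data at $x_0$, which can be accommodated without altering the Galois group or the branch locus.
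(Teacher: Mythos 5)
There is a genuine gap in the patching step, and it is exactly the point you flag as ``the main obstacle.'' Your plan is to glue, directly over $k$, a new local $I$-extension $\hat{Y}_0'$ of filtration $\Sigma'$ to a ``modified'' \'etale $G$-cover of $X \setminus \{x_0\}$ whose restriction to $\hat{X}_0^\ast$ matches the $G$-induction of $\hat{Y}_0'$. But two $I$-extensions of $k((t))$ with different upper jumps are not isomorphic over $k((t))$: matching the tame Kummer sub-covers (which is all the congruence $mu_1 \equiv mu_1' \pmod{m}$ gives you) does not make the full local covers agree on the punctured formal neighborhood. The restriction of a $G$-cover of $X\setminus\{x_0\}$ to $\hat{X}_0^\ast$ is determined by that cover, so one cannot ``modify its wild decomposition data at $x_0$'' independently of everything else; producing an \'etale cover of $X\setminus\{x_0\}$ whose local behavior at $x_0$ realizes $\Sigma'$ is essentially the statement you are trying to prove. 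A symptom of the problem: your argument never uses the hypothesis $\Sigma' \geq \Sigma$, which is essential to the result's proof.

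The missing idea is to work over $R = k[[t]]$ rather than over $k$. The paper first constructs a \emph{singular deformation} of the germ of $f$ at the branch point (Proposition \ref{Pdeform}): using the explicit equations of Theorem \ref{Texplicit}, one replaces $x_i$ by $x_i + t x^{mu_i'}$, so that the special fiber $t=0$ is the original germ (away from one singular point) while the generic fiber has filtration $\Sigma'$; this is where $\Sigma' \geq \Sigma$ and the congruence on $u_1$ are actually used. Because the $t=0$ fiber literally agrees with the original local cover, the gluing with the rest of $Y$ is automatic, and the induced disconnected $G$-cover together with the trivial deformation of $f$ away from $\xi$ forms the data of a relative $G$-Galois thickening problem, solved by Harbater--Stevenson. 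One then descends to a finite-type subring $R' \subset k[[t]]$ and specializes at a generic $k$-point to obtain $f'$. Your instinct to use patching in the spirit of Harbater is in the right family of techniques, but without the deformation parameter there is no overlap on which the old and new data agree, and the patching cannot be carried out as you describe.
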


The key step is to show that there is a ``singular deformation'' (defined below, or see \cite[\S3]{Pr:lg}) of our original cover in a formal neighborhood of 
the branch point to a germ of a cover with the new ramification filtration $\Sigma'$.  
In the case $r=1$, the deformation result we need is given by \cite[Proposition 2.2.1]{Pr:cw}, whereas in the case $m=1$, we can use
\cite[Proposition 22]{Pr:lg}.  One can adapt the proof of \cite[Proposition 22]{Pr:lg} for general $m$, but
we will present a different proof based on the explicit equations of \cite{OP:wc}.

\subsection{Explicit equations}\label{Sexplicit}
We first write down the explicit form of any $I$-extension of the complete local ring of a point on $X$.

\begin{theorem}\label{Texplicit}
Let $k$ be an algebraically closed field of characteristic $p$ and let $I \cong \ints/p^r \rtimes \ints/m$, with $p \nmid m$.
Then any $I$-extension of $k((u))$ can be given by the following equations:
\begin{eqnarray}
x^m &=& \frac{1}{u} \label{E1}\\
y_i^p - y_i &=& f_i(y_1, \ldots, y_{i-1}, x_1, \ldots x_i),\ 1 \leq i \leq r \label{E2},
\end{eqnarray}
where the $x_i$ are polynomials in $k[x]$ with the degrees of all terms lying in a common residue class modulo $m$ and prime to $p$.  
The $f_i$ are the specific polynomials in $\FF_p[y_1, \ldots, y_{i-1}, x_1, \ldots, x_i]$
from \cite[top of p.\ 568]{OP:wc}.  The lone term of $f_i$ involving $x_i$ is $x_i$ itself.
Any element $c$ of order $m$ in $I$ satisfies $c(x)/x = \zeta$, where $\zeta$ is an $m$th root of unity.  
Furthermore, for all $i$, $c(x_i)/x_i = c(y_i)/y_i = \zeta^j$, where $\zeta^j$ is an $m_I$th root of unity.  There is an element $\sigma$
of order $p^n$ in $I$ such that $\sigma(y_i) = y_i + f_i(y_1, \ldots, y_{i-1}, 1, 0, \ldots, 0)$.  

Conversely, if $L/k((u))$ is given by the equations (\ref{E1}) and (\ref{E2}), then it can be made an $I$-Galois extension under the action of $c$ and 
$\sigma$ described above.  Its upper higher ramification filtration is $(u_1, \ldots, u_r)$, where the $u_i$ are defined inductively by 
$u_1 = \frac{\deg_x(x_1)}{m}$ and  $u_i = \max(\frac{\deg_x(x_i)}{m}, pu_{i-1})$.  
\end{theorem}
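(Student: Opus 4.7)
The strategy is to construct the extension in two stages matching the semidirect product $I = \ints/p^r \rtimes \ints/m$. The fixed field of the normal $p$-Sylow $\ints/p^r$ is a tame, totally ramified $\ints/m$-extension of $k((u))$, and any such extension has (up to isomorphism) the form $k((x))$ with $x^m = 1/u$, giving equation (\ref{E1}); an order-$m$ element $c \in I$ must then act by $c(x) = \zeta x$ for some primitive $m$-th root of unity $\zeta$. The remaining extension $L/k((x))$ is $\ints/p^r$-Galois, so by Artin-Schreier-Witt theory it can be built as a tower of Artin-Schreier steps $y_i^p - y_i = g_i$ with $g_i \in L_{i-1}$.

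To bring the $g_i$ to the precise form (\ref{E2}), I would directly import the explicit Witt-vector normalization of \cite{OP:wc}: its universal polynomials $f_i$ are set up so that $x_i$ appears linearly and as its own distinguished term, and a standard Artin-Schreier manipulation (absorbing $p$-th powers into a change of $y_i$) reduces to the case where every monomial of $x_i$ has $x$-degree prime to $p$. The element $\sigma$ is then defined explicitly by $\sigma(y_i) = y_i + f_i(y_1,\ldots,y_{i-1},1,0,\ldots,0)$, which mirrors Witt-vector addition and provides a generator of $\ints/p^r$.

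The main obstacle is imposing compatibility with the $\ints/m$-action, and this is where I expect the real work. Conjugating each Artin-Schreier equation by $c$ and comparing with the original (using the explicit shape of $f_i$) forces the $y_i$ to be $c$-eigenvectors with eigenvalue $\zeta^j$; the order of $\zeta^j$ must equal the order of the conjugation action of $c$ on $\ints/p^r$, which by definition of $m_I$ is exactly $m_I$. The same comparison applied to $x_i$ forces all of its monomials to lie in a single $\zeta^j$-isotypic component, i.e., all their $x$-exponents must be congruent modulo $m$. One then checks the semidirect relation $c\sigma c^{-1} = \sigma^{?}$ against the defined actions, which identifies the group $\langle c, \sigma\rangle$ with $I$.

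The converse is routine: equations (\ref{E1})--(\ref{E2}) together with the prescribed $c$ and $\sigma$ visibly define a degree-$mp^r$ Galois extension with group $I$. For the upper-numbering jumps I would run the standard induction along the Artin-Schreier tower: the $i$-th step has lower-numbering jump equal to the pole order of $g_i$ in the uniformizer, i.e., $\deg_x(x_i)$; converting to upper numbering relative to $k((u))$ (where $x$ has ramification index $m$) via the Herbrand function, and combining with the linear growth $\psi(t)=pt$ above $u_{i-1}$, yields $u_1 = \deg_x(x_1)/m$ and $u_i = \max(\deg_x(x_i)/m,\, p u_{i-1})$ as claimed.
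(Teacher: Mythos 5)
The paper's own ``proof'' of this theorem is a single sentence: the statement is presented as a summary of \cite[\S 3, 4, 5]{OP:wc}, with no argument given. Your sketch reconstructs the content of that reference along essentially the same lines it follows --- tame $\ints/m$-subextension first, then the Artin--Schreier--Witt tower with the explicit polynomials $f_i$, eigenvalue analysis forcing the $x_i$ and $y_i$ into a single isotypic component for the $c$-action, and the Herbrand function to convert the pole orders $\deg_x(x_i)$ into the upper jumps $u_i = \max(\deg_x(x_i)/m,\, pu_{i-1})$ --- so this is the same approach, written out rather than cited.
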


\begin{proof}
This summarizes the content of \cite[\S3, 4, 5]{OP:wc}.
\end{proof}

\subsection{Deformation}\label{Sdeformation}
We follow the notation and definitions of \cite[\S3]{Pr:lg}.  Recall that $f: Y \to X$ is a $G$-cover branched at one point with inertia groups isomorphic to 
$I \cong \ints/p^r \rtimes \ints/m$.
Let $R = k[[t]]$.  Let $U_k = \Spec k[[u]]$, and let $U_R = U_k \times_k R = \Spec k[[t,u]]$.  Let $\hat{\phi}: \hat{Z} \to U_k$ be the germ of $f$ at a ramification point $y$ of $f$.
A \emph{singular deformation} of $\hat{\phi}$ is an $I$-Galois cover $\hat{\phi}_R: \mc{Y} \to U_R$ of normal irreducible germs of $R$-curves, with 
branch locus $u = 0$, such that the normalization of the subscheme $t=0$ of $\hat{\phi}_R$ is isomorphic to $\hat{\phi}$ away from $t=u=0$.    

\begin{prop}\label{Pdeform}
In the above situation, suppose $\hat{\phi}$ has upper higher ramification filtration $\Sigma = (u_1, \ldots, u_r)$.  If $\Sigma' = (u_1', \ldots, u_r') 
\geq \Sigma$ is an $I$-admissible sequence with $mu_1 \equiv mu_1' \pmod{m}$, 
then there is a singular deformation $\hat{\phi}_R$ of $\hat{\phi}$ such that $\hat{\phi}_R \times_R k((t))$ has upper higher ramification filtration
$\Sigma'$.
\end{prop}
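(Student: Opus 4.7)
The plan is to build the deformation directly from the explicit equations supplied by Theorem \ref{Texplicit}, perturbing each $x_i$ by a single $t$-dependent monomial of the required degree.

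First, apply Theorem \ref{Texplicit} to represent $\hat{\phi}$ over $k((u))$ by the tame relation $x^m = 1/u$ and the Artin-Schreier relations $y_i^p - y_i = f_i(y_1, \ldots, y_{i-1}, x_1, \ldots, x_i)$, each $x_i \in k[x]$ having all monomials of $x$-degree in a common residue class $d := mu_1 \bmod m$ and prime to $p$. The hypothesis $mu_1 \equiv mu_1' \pmod{m}$ combined with admissibility (d) for $\Sigma'$ yields $mu_i' \equiv d \pmod{m}$ for all $i$, and admissibility (c) yields $p \nmid mu_i'$ whenever $u_i' > p u_{i-1}'$. Hence a monomial $x^{mu_i'}$ is admissible in the form required by Theorem \ref{Texplicit}. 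Set $R = k[[t]]$ and define
\[
x_i'(t,x) = \begin{cases} x_i(x) + t\, x^{m u_i'}, & \text{if $u_i' > u_i$ and $u_i' > p u_{i-1}'$,} \\ x_i(x), & \text{otherwise.} \end{cases}
\]
Let $\mc{Y}$ be the scheme cut out over $U_R$ by $x^m = 1/u$ and $y_i^p - y_i = f_i(y_1,\ldots,y_{i-1}, x_1'(t,x), \ldots, x_i'(t,x))$, equipped with the $I$-action of Theorem \ref{Texplicit}. The action of $c$ is still well-defined because every monomial in each $x_i'$ has $x$-degree in the class $d \pmod m$; the action of $\sigma$ is unchanged because the $f_i$ are unchanged. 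The $y_i$-equations form a tower of étale Artin-Schreier extensions (derivative $-1$ in the $y_i$-direction), so $\mc{Y}$ is normal, and the converse direction of Theorem \ref{Texplicit} applied over $k((t))$ shows that the generic fibre is a field extension, so $\mc{Y}$ is integral. Setting $t=0$ returns the original equations verbatim, so the $t=0$ fibre is already isomorphic to $\hat{Z}$ and is normal, giving the singular-deformation condition at once.

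It remains to identify the upper higher ramification filtration $(v_1,\ldots,v_r)$ of $\hat{\phi}_R \times_R k((t))$. By the converse direction of Theorem \ref{Texplicit}, $v_1 = \deg_x(x_1'|_{t\neq 0})/m$ and $v_i = \max(\deg_x(x_i'|_{t\neq 0})/m, p v_{i-1})$. Induct on $i$, assuming $v_{i-1} = u_{i-1}'$. In the perturbed case, $\deg_x(x_i'|_{t\neq 0}) = m u_i' > m p u_{i-1}' = m p v_{i-1}$, so $v_i = u_i'$. If $u_i' = u_i$, then $x_i'=x_i$ has degree $m u_i$ and admissibility forces $u_i \geq p u_{i-1}'$, giving $v_i = u_i = u_i'$. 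If $u_i' = p u_{i-1}' > u_i$, then $\deg_x(x_i) = m u_i < m u_i'$, so $v_i = p v_{i-1} = p u_{i-1}' = u_i'$. Hence $(v_1,\ldots,v_r) = \Sigma'$, as required.

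The main obstacle is the inductive case analysis on the generic fibre, particularly the edge case $u_i' = p u_{i-1}'$ in which no explicit new term is added yet the jump still shifts through the inductive recurrence; once this is settled, the remaining ingredients (well-definedness of the $I$-action, normality, integrality, and agreement at $t=0$) are automatic from the explicit form of the equations, and $\hat{\phi}_R$ is the desired singular deformation.
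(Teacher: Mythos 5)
Your proposal is correct and follows essentially the same route as the paper: the identical perturbation $x_i \mapsto x_i + t\,x^{mu_i'}$ (applied under the same condition $u_i' > u_i$ and $u_i' > pu_{i-1}'$), followed by the same induction on the upper jumps via the recursion $u_i = \max(\deg_x(x_i)/m,\ pu_{i-1})$ from Theorem \ref{Texplicit}. The only quibble is that in your unperturbed cases $\deg_x(x_i)$ need only satisfy $\deg_x(x_i) \leq mu_i$ (equality can fail when $u_i = pu_{i-1}$), but your estimates go through unchanged with $\leq$ in place of $=$.
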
 
 
\begin{proof}
 By Theorem \ref{Texplicit}, the morphism $\hat{\phi}$ is given by the normalization of $U_k$ in a field generated by the equations (\ref{E1}) and (\ref{E2}).  Now, consider the 
$I$-Galois extension $\hat{\phi}_R: \mc{Y} \to U_R$ given by normalizing $U_R$ in the function field generated by the equations (\ref{E1}) and (\ref{E2}), but with $x_i$ replaced by 
$x_i' := x_i + tx^{mu_i'}$ if $u_i' > pu_{i-1}'$ and $u_i' > u_i$.  The only
singular point of $\mc{Y}$ lies above $u=t=0$.  It is unramified away from $u=0$, because the right-hand sides of (\ref{E1}) and (\ref{E2}) have poles only at $u=0$.  Away from $t=0$, 
our equations for $\hat{\phi}$ give an extension in the form of Theorem \ref{Texplicit} with the $x_i$ replaced by $x_i'$.  It is easy to see that the degrees of the monomials in $x_i'$ 
(as functions of $x$) satisfy the conditions of Theorem \ref{Texplicit} when $t \ne 0$.  

We prove that, when $t \ne 0$, the $i$th upper jump $u_i''$ is equal to $u_i'$ by induction.  For $i=1$, it is clear, as both
are equal to $\deg_x(x_i')/m$.  For general $i$, assume first that $u_i' > pu_{i-1}'$.  Then $\deg_x(x_i')/m = u_i' > pu_{i-1}'$, and $u_{i-1}' = u_{i-1}''$ 
by the inductive hypothesis. 
So by Theorem \ref{Texplicit}, we have $u_i'' = u_i'$.  Now assume $u_i' = pu_{i-1}'$.  
Then $\deg_x(x_i')/m = \deg_x(x_i)/m \leq u_i \leq u_i' = pu_{i-1}' = pu_{i-1}''$.  So by Theorem \ref{Texplicit}, we have 
$u_i'' = pu_{i-1}'' = pu_{i-1}' = u_i'$.
\end{proof}

\begin{proof}[Proof of Proposition \ref{Pthickening}]
The proof is identical to that of \cite[Proposition 2.2.2]{Pr:cw}, with our Proposition \ref{Pdeform} substituting for \cite[Proposition 2.2.1]{Pr:cw}, so we only
give a sketch.  Let $\xi$ be the branch point of $f: Y \to X$. 
Construct the singular deformation at the formal neighborhood $U_k$ of $\xi$ as in Proposition \ref{Pdeform}.  
Then induce this $I$-Galois cover up to $G$, forming a disconnected
cover.  This gives the data for a \emph{relative $G$-Galois thickening problem}, which has a solution by \cite[Theorem 4]{HS:pt}.  Namely, there
is a $G$-cover $f_R: Y_R \to X_R$, where $R = k[[t]]$, where $X_R = X \times_k R$, where $f_R$ is isomorphic to the trivial deformation of $f$ 
away from $\xi \times_k R$, and where $f_R$ is our induced singular deformation above a formal neighborhood of $\xi \times_k R$.  Since the 
construction is finite in nature, we can construct a cover
$f_{R'}: Y_{R'} \to X_{R'}$ over a subring $R' \subset R$ that is of finite type over $k$ such that $f_{R'} \times_{R'} R \cong f_R$.  
Then $\Spec R'$ has infinitely many $k$-points, and for a generic $k$-point $u$, we have that $f_{R'} \times_{R'} \{u\}$ satisfies the requirements 
of the proposition. 
\end{proof}

Combining Proposition \ref{Pthickening} with the results of \S\ref{Sonepoint} we have the following:

\begin{corollary}\label{Chigherram}
Let $k$ be an algebraically closed field of characteristic $p \geq 7$, and let $G \cong PSL_2(\ell)$, such that $\ell$ is odd and $v_p(|G|) = a \geq 
1$.  Let $I \cong \ints/p^r$ or $D_{p^r}$.  Then there exists an $I$-admissible sequence $\Sigma$ for $I$ such that, for any $I$-admissible
sequence $\Sigma' \geq \Sigma$, there is a one-point cover $f: Y' \to X = \proj^1$ over $k$ with inertia groups $I$ and upper higher ramification filtration
$\Sigma'$.  If $I \cong D_p$, we can take $\Sigma = (\frac{3}{2})$.  If $I = \ints/p$, we can take $\Sigma = (3)$ (or $\Sigma = (2)$ if $\ell \equiv \pm 1
\pmod{8}$).
\end{corollary}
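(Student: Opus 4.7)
The plan is to combine the existence results of \S\ref{Sonepoint} with the thickening procedure of Proposition \ref{Pthickening}; the whole argument is essentially assembly once those ingredients are in place.

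The first step will be to produce, for each allowed $I$, one base one-point $G$-cover with inertia $I$ and some $I$-admissible upper higher ramification filtration $\Sigma$. For $I \cong D_p$, Theorem \ref{Tonepointcover} supplies such a cover with $\Sigma = (\tfrac{3}{2})$. For $I \cong \ints/p$, Remark \ref{Rconductor3} furnishes one with $\Sigma = (3)$, and with $\Sigma = (2)$ when $\ell \equiv \pm 1 \pmod{8}$. For $I \cong D_{p^r}$ or $\ints/p^r$ with $r > 1$, Corollary \ref{Cbiggergroups} provides a cover with inertia $I$, whose upper filtration is some $I$-admissible sequence that we declare to be $\Sigma$; no explicit formula is needed.

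The second step, given any $I$-admissible $\Sigma' = (u_1', \ldots, u_r') \geq \Sigma$, is to apply Proposition \ref{Pthickening} to the base cover to produce a one-point $G$-cover with inertia $I$ and upper higher ramification filtration $\Sigma'$. The only hypothesis of that proposition to verify is the congruence $mu_1 \equiv mu_1' \pmod{m}$, and this will be automatic in our setting. For $I \cong \ints/p^r$ we have $m = 1$, so the condition is vacuous. For $I \cong D_{p^r}$ we have $m = m_I = 2$, and Theorem \ref{Tadmissible}(b) forces $\gcd(2, 2u_1) = \gcd(2, 2u_1') = 1$, so both $2u_1$ and $2u_1'$ are odd and thus congruent modulo $2$.

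There is no serious obstacle here: all of the substantive work is already packaged in Proposition \ref{Pthickening} and the existence results of \S\ref{Sonepoint}. The only arithmetic check, the congruence $mu_1 \equiv mu_1' \pmod{m}$, is forced by the $I$-admissibility conditions whenever $I$ is cyclic or dihedral. In this sense the corollary is the natural wrapper that records, at the level of higher ramification filtrations, the consequences of the existence theorems of the previous section.
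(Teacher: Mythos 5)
Your proposal is correct and follows essentially the same route as the paper's proof: obtain a base cover from Corollary \ref{Cbiggergroups} (with the explicit $\Sigma$ for $D_p$ and $\ints/p$ coming from Theorem \ref{Tonepointcover} and Remark \ref{Rconductor3}), then apply Proposition \ref{Pthickening}, checking the congruence $mu_1 \equiv mu_1' \pmod{m}$ via Theorem \ref{Tadmissible}(b) exactly as the paper does.
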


\begin{proof}
By Corollary \ref{Cbiggergroups}, there is a one-point $G$-cover $f: Y \to X$ with inertia groups $I$.  Let $\Sigma = (u_1, \ldots, u_r)$ be its upper
higher ramification filtration.  Suppose $\Sigma' = (u_1', \ldots, u_r') \geq \Sigma$ is any $I$-admissible sequence.  By Proposition \ref{Pthickening},
there exists a one-point $G$-cover $f': Y' \to X$ with inertia groups $I$ and upper higher ramification filtration $\Sigma'$, so long as, if $I \cong D_{p^r}$,
we have $2u_1 \cong 2u_1' \pmod{2}$.  This is true because, according to Theorem \ref{Tadmissible} (b), $2u_1$ and $2u_1'$ must both be odd.

Theorem \ref{Tonepointcover} shows that we can take $\Sigma = (\frac{3}{2})$ if $I \cong D_p$.  Remark \ref{Rconductor3}
shows that we can take $\Sigma = (3)$ if $I \cong \ints/p$, and $\Sigma = (2)$ if, in addition, $\ell \equiv \pm 1 \pmod{8}$.
\end{proof}

\begin{remark}\label{Pminimal}
One would like to find an explicit minimal choice of $\Sigma$ for general $I$.  However, the result \cite[Theorem 3.6]{Ha:ab} used in
the proof of Corollary \ref{Cbiggergroups} is not constructive as stated.  It would be interesting to look carefully through the proof of 
\cite[Theorem 3.6]{Ha:ab} to see if it can be made constructive.
\end{remark}

\backmatter

\end{document}